\newtheorem{thm}{Theorem}{\bf}{\it}
\newtheorem{lem}[thm]{Lemma}{\bf}{\it}
\newtheorem{ex}[thm]{Example}{\bf}{\rm}
\newcommand{\D}{\mathcal{D}}
\newcommand{\HH}{\mathcal{H}}
\newcommand{\ee}{\mathrm{e}}
\newcommand{\e}[1]{\cdot 10^{#1}}
\def\norm#1{\lVert#1\rVert}
\def\sfrac#1#2{\mbox{\footnotesize$\displaystyle\frac{#1}{#2}$}}
\title{A convergence analysis of the Peaceman--Rachford scheme for semilinear evolution equations}
\author{Eskil Hansen\thanks{Centre for Mathematical Sciences, Lund University, P.O.\ Box 118, SE-221 00 Lund, Sweden ({\tt eskil@maths.lth.se}).
        The work of the first author was supported by the Swedish Research Council under grant 621-2011-5588.}
\and Erik Henningsson\thanks{Centre for Mathematical Sciences, Lund University, P.O.\ Box 118, SE-221 00 Lund, Sweden ({\tt erikh@maths.lth.se}).
        The work of the second author was supported by the Crafoord Foundation under grant 20110535.}
}
\begin{document}
\maketitle

\begin{abstract}
The Peaceman--Rachford scheme is a commonly used splitting method for discretizing semilinear evolution equations, where the vector fields are given by the sum of one linear and one nonlinear dissipative operator. Typical examples of such equations are reaction-diffusion systems and the damped wave equation. In this paper we conduct a convergence analysis for the Peaceman--Rachford scheme in the setting of dissipative evolution equations on Hilbert spaces. We do not assume Lipschitz continuity of the nonlinearity, as previously done in the literature. First or second order convergence is derived, depending on the regularity of the solution, and a shortened proof for $o(1)$-convergence is given when only a mild solution exits. The analysis is also extended to the Lie scheme in a Banach space framework. The convergence results are illustrated by numerical experiments for Caginalp's solidification model and the Gray--Scott pattern formation problem.
\end{abstract}

\begin{keywords}
Peaceman--Rachford scheme, convergence order, semilinear evolution equations, reaction-diffusion systems, dissipative operators
\end{keywords}

\begin{AMS}
65J08, 65M12, 47H06
\end{AMS}

\pagestyle{myheadings}
\thispagestyle{plain}
\markboth{E.\ HANSEN AND E.\ HENNINGSSON}
{\sc{Convergence of the Peaceman--Rachford scheme}}

\section{Introduction}\label{sec:intro}

Semilinear evolution equations, i.e.,
\begin{equation}\label{eq:evolv}
\dot{u}=(A+F)u,\quad u(0)=\eta,
\end{equation}
are frequently encountered in biology, chemistry and physics, as they describe reaction-diffusion systems, as well as the damped wave equation. The operator $A$ is assumed to be linear, typically describing the diffusion process, and the operator $F$ can be nonlinear, e.g., arising from chemical reactions governed by the rate law. Both operators are assumed to be dissipative and may therefore give rise to stiff ODE systems when discretized in space. A common choice of temporal discretization is the (potentially) second order Peaceman--Rachford scheme. The solution at time $t=nh>0$ is then approximated as $u(nh)\approx S^n\eta$, where a single time step is given by the nonlinear operator
\begin{equation}\label{eq:PR}
S=(I-\sfrac h2F)^{-1}(I+\sfrac h2A)(I-\sfrac h2A)^{-1}(I+\sfrac h2F).
\end{equation}
As for any splitting method, the Peaceman--Rachford scheme has the advantage that the actions of the operators $A$ and $F$ are separated. This may reduce the computational cost dramatically. For example, in the context of a reaction-diffusion system the action of the linear resolvent $(I-h/2\,A)^{-1}$ can be approximated by a standard fast elliptic equation solver and the action of the nonlinear resolvent $(I-h/2\,F)^{-1}$ can often be expressed in a closed form. Further beneficial features of the scheme are that it, contrary to exponential schemes, does not require the exact flows related to $A$ and $F$, and the computational cost of evaluating the action of the operator $S$ is similar to that of the first order Lie scheme, where a time step is given by the operator
\begin{equation*}
(I-hF)^{-1}(I-hA)^{-1}.
\end{equation*}

The Peaceman--Rachford scheme was originally introduced in \cite{PEA}, with the motivation to conduct dimension splitting for the heat equation, i.e.,
\begin{equation*}
A+F=\partial_{xx}+\partial_{yy}
\end{equation*}
in two dimensions, and their approach of splitting has become a very active field of research. For an introductional reading on splitting schemes and their applications we refer to \cite[Chapter~IV]{HUN2}. Second order convergence of the scheme has been proven in \cite{DES}, when applied to reaction-diffusion systems given on the whole $\mathbb{R}^d$ and with Lipschitz continuous nonlinearities $F$. Second order convergence has also been established in~\cite{HAN1} under the assumption that the operator $F$ is linear and unbounded, i.e., applicable to dimension splitting of linear parabolic equations. Convergence, but without any order, is proven for the fully nonlinear problem in \cite{LIO}. See also \cite{HUN1,SCH} for further numerical considerations. In the setting of exponential splitting schemes, second order convergence for the Strang splitting has been established for semilinear problems in \cite{FAO,HAN2}. Further results for exponential schemes are, e.g., surveyed in \cite[Section III.3]{LUB} and \cite{MCL}.

However, to the best of our knowledge, there is still no convergence (order) analysis of the Peaceman--Rachford scheme applied to equation \eqref{eq:evolv}, which does not assume that the dissipative operator $F$ is either linear or Lipschitz continuous. Note that the latter assumption is rather restrictive. A concrete reaction-diffusion problem that does not fulfill the assumption is the Allen--Cahn equation, where
 \begin{equation*}
(A+F)u=\Delta u+(u-u^3),
\end{equation*}
equipped with suitable boundary conditions and interpreted as an evolution equation on $L^2(\Omega)$. The aim of this paper is therefore to conduct a convergence analysis for the scheme at hand without assuming linearity or Lipschitz continuity of the operator~$F$.

\section{Problem setting}\label{sec:set}

Let $\HH$ be a real Hilbert space with the inner product and the norm denoted as  $(\cdot,\cdot)$ and $\norm{\cdot}$, respectively.
For every operator $G:\D(G)\subseteq\HH\to\HH$ we define its Lipschitz constant $L[G]$ as the smallest possible constant $L\in[0,\infty]$ such that
\begin{equation*}
\norm{Gu-Gv}\leq L\norm{u-v}\qquad\text{for all $u$, $v$ in $\D(G)$}.
\end{equation*}
Furthermore, an operator $G:\D(G)\subseteq\HH\to\HH$ is maximal (shift) dissipative if and only if there is a constant $M[G]\geq 0$ for which the
operator $G$ satisfies the range condition
\begin{equation}\label{eq:rang}
\mathcal{R}(I-hG)=\HH\qquad\text{for all $h>0$ such that $hM[G]<1$},
\end{equation}
and the dissipativity condition
\begin{equation}\label{eq:diss}
(Gu-Gv,u-v)\leq M[G]\norm{u-v}^2\qquad\text{for all $u$, $v$ in $\D(G)$}.
\end{equation}
A direct consequence of an operator $G$ being maximal dissipative is that the related resolvent $(I-hG)^{-1}:\HH\to\D(G)\subseteq\HH$ is well defined and
\begin{equation*}
L[(I-hG)^{-1}]\leq 1/(1-hM[G])
\end{equation*}
for all $h>0$ such that $hM[G]<1$. With this in place, we can characterize our problem class as follows:

{\sc Assumption 1.}
\emph{ The operators $A:\D(A)\subseteq\HH\to\HH$,
$F:\D(F)\subseteq\HH\to\HH$ and $A+F:\D(A)\cap\D(F)\subseteq\HH\to\HH$
are all maximal dissipative on $\HH$. }

If Assumption 1 is valid, then there exists a unique mild solution $u$ to the semilinear evolution equation \eqref{eq:evolv} for every $\eta$ in the closure of
$\D(A)\cap\D(F)$. The related solution operator is given by a nonlinear semigroup $\{\ee^{t(A+F)}\}_{t\geq 0}$, where
\begin{equation*}
u(t)=\ee^{t(A+F)}\eta.
\end{equation*}
The nonlinear operator $\ee^{t(A+F)}$ is invariant over the closure of
$\D(A)\cap\D(F)$ and can be characterized by the limit
\begin{equation*}
\ee^{t(A+F)}\eta =\lim_{n\to\infty}\bigl(I-\sfrac tn (A+F)\bigr)^{-n}\eta.
\end{equation*}
A contemporary survey of maximal dissipative operators and nonlinear semigroups can be found in the monograph~\cite[Sections~3.1 and~4.1]{BAR}.

\section{Preliminaries}\label{sec:prel}

In order to shorten the notation slightly, we introduce the abbreviations
\begin{equation*}
a=\sfrac 12 h A,\quad \alpha=(I-a)^{-1}, \quad f=\sfrac 12 hF\quad\text{and}
\quad \varphi=(I-f)^{-1}.
\end{equation*}
We will also make frequent use of the identities
\begin{equation*}
I=\alpha-\alpha a=\alpha-a\alpha\quad\text{and}\quad I=\varphi(I-f)=\varphi-f\varphi,
\end{equation*}
without further references. The time stepping operator of the Peaceman--Rachford scheme \eqref{eq:PR} then reads as
\begin{equation}\label{eq:PR2}
S=\varphi(I+a)\alpha(I+f).
\end{equation}

Due to the presence of the term $I+f$ in \eqref{eq:PR2}, the time stepping operator $S$ is, in general, not Lipschitz continuous. Hence, one needs to
modify the scheme in order to establish stability and convergence. To this end, we consider the auxiliary time stepping operator
\begin{equation}\label{eq:aux}
R=(I+a)\alpha(I+f)\varphi,
\end{equation}
which relates to $S$ via the equality
\begin{equation*}
S^j\varphi=\varphi R^j
\end{equation*}
for all $j\geq 0$.

\begin{lem}\label{lem:stab}
If Assumption 1 is valid and $h\max\{M[A],M[F]\}\leq 1$, then
\begin{equation*}
L[R^j]\leq\ee^{3/2\, jh(M[A]+M[F])}
\end{equation*}
for every $j\geq 0$.
\end{lem}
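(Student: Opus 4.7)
The plan is to factor $R=C_AC_F$ with $C_A=(I+a)\alpha$ and $C_F=(I+f)\varphi$, to bound the Lipschitz constant of each Cayley-type factor separately, and then to apply submultiplicativity twice: $L[R^j]\le L[R]^j\le\bigl(L[C_A]\,L[C_F]\bigr)^j$.

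The key estimate I would prove is that for any maximal dissipative operator $G$ on $\HH$ with $hM[G]\le 1$,
\[
L\bigl[(I+\sfrac h2 G)(I-\sfrac h2 G)^{-1}\bigr]\le\frac{2+hM[G]}{2-hM[G]}.
\]
To derive it I would use the identity $I=\alpha-a\alpha$ (applied with $G$ in place of $A$) to rewrite the Cayley transform as $2\alpha-I$, and for $u,v\in\HH$ I would set $w=\alpha u-\alpha v\in\D(G)$, so that $u-v=(I-a)w$ and $(I+a)\alpha u-(I+a)\alpha v=(I+a)w$. Expanding the squared norms gives
\[
\norm{(I+a)w}^2-\norm{u-v}^2=4(w,aw)\le 2hM[G]\norm{w}^2
\]
by the dissipativity condition~\eqref{eq:diss}. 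Combining this with the resolvent bound $\norm{w}\le\norm{u-v}/(1-(h/2)M[G])$ and the algebraic identity $(1-x)^2+4x=(1+x)^2$ with $x=(h/2)M[G]$ yields, after taking square roots, the claimed ratio.

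To conclude, I would verify the elementary bound $(2+x)/(2-x)\le\ee^{3x/2}$ on $x\in[0,1]$ by comparing derivatives of logarithms: $\frac{d}{dx}\ln\frac{2+x}{2-x}=\frac{4}{4-x^2}\le\frac{4}{3}\le\frac{3}{2}$ on $[0,1]$, so integrating from $0$ yields the inequality. Applying the key estimate with $G=A$ and $G=F$ together with this exponential bound gives $L[C_A]\le\ee^{3hM[A]/2}$ and $L[C_F]\le\ee^{3hM[F]/2}$, whence $L[R]\le\ee^{3h(M[A]+M[F])/2}$, and $L[R^j]\le L[R]^j$ completes the lemma. The main obstacle is the algebraic rearrangement that turns the additive dissipativity estimate into the clean multiplicative Cayley ratio; the rest is routine bookkeeping with the resolvent and an elementary scalar inequality.
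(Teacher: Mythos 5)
Your proposal is correct in substance and follows essentially the same route as the paper: the same factorization $R=C_AC_F$, the same Cayley-ratio bound $(1+\sfrac h2 M[G])/(1-\sfrac h2 M[G])$ obtained from the identity $\norm{(I+a)w}^2-\norm{(I-a)w}^2=4(w,aw)$ plus the resolvent bound, and the same elementary exponential estimate. The only point to tidy up is the nonlinear factor $C_F$: since $f$ is nonlinear, you cannot write $u-v=(I-f)w$ with $w=\varphi u-\varphi v$; the identities must be read as differences, i.e.\ $u-v=(w_1-w_2)-(fw_1-fw_2)$ with $w_i$ the individual resolvents and the cross term $4(fw_1-fw_2,w_1-w_2)$ bounded by dissipativity, which is exactly how the paper phrases the computation and requires no new idea.
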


\begin{proof}
Let $u,v$ be two arbitrary elements of $\D(F)$. A twofold usage of the dissipativity then gives the inequality
\begin{equation*}
\begin{split}
\norm{(I+f)u-(I+f)v}^2 & = \norm{u-v}^2+2(fu-fv,u-v)+\norm{fu-fv}^2\\
 					& \leq (1+hM[F])\norm{u-v}^2+\norm{fu-fv}^2\\
  					& \leq (1+2hM[F])\norm{u-v}^2-2(fu-fv,u-v)+\norm{fu-fv}^2\\
  					& = 2hM[F]\norm{u-v}^2+\norm{(I-f)u-(I-f)v}^2.
\end{split}
\end{equation*}
Replacing $u,v$ by $\varphi z,\varphi w$,  then yields that
\begin{equation*}
\begin{split}
\norm{(I+f)\varphi z-(I+f)\varphi w} & \leq (2hM[F] L[\varphi]^2+1)^{1/2}\norm{z-w}\\
							  & \leq (1+h/2\,M[F])/(1-h/2\,M[F]) \norm{z-w}.						
\end{split}
\end{equation*}
As the above inequality is valid for any $z,w\in\HH$, we obtain the bound
\begin{equation*}
L[(I+f)\varphi]\leq (1+h/2\,M[F])/(1-h/2\,M[F]).
\end{equation*}
The same type of Lipschitz continuity holds for the operator $(I+a)\alpha$, hence,
\begin{equation*}
L[R^j]\leq L[(I+a)\alpha]^jL[(I+f)\varphi]^j\leq\ee^{3/2\, jh(M[A]+M[F])},
\end{equation*}
where the last inequality follows as $(1+x)/(1-x)\leq \ee^{x+2x}$ for all
$x\in [0,1/2]$. \qquad\end{proof}

\section{Convergence of the Peaceman--Rachford scheme}\label{sec:PR}

The scheme is often employed for problems with rather smooth solutions, e.g., in the context of reaction-diffusion equations, and we therefore start to
derive a global error bound valid for sufficiently regular solutions.

{\sc Assumption 2.}
\emph{ The evolution equation \eqref{eq:evolv} has a classical solution $u$, i.e., the function $u\in C^1([0,T];\HH)$ satisfies $\dot{u}(t)=(A+F)u(t)$ for every
time $t\in[0,T]$. Furthermore, the solution satisfies one of the following statements:
\begin{romannum}
\item $u\in W^{2,1}(0,T;\HH)$ and $A\dot{u}\in L^1(0,T;\HH)$;
\item $u\in W^{3,1}(0,T;\HH)$, $A\ddot{u}\in L^1(0,T;\HH)$, and $A^2\dot{u}\in L^1(0,T;\HH)$.
\end{romannum} }

With such regularity present the Peaceman--Rachford scheme is either first or second order convergent.

\begin{thm}\label{thm:conv12}
Consider the Peaceman--Rachford discretization \eqref{eq:PR} of the semilinear evolution equation \eqref{eq:evolv}. If Assumption 1 is valid and
$h\max\{M[A],M[F]\}\leq 1$, then the global error of the Peaceman--Rachford approximation can be bounded as
\begin{equation*}
\norm{u(nh)-S^n\eta}\leq \sfrac 52\, h^p\,\ee^{3/2\, T(M[A]+M[F])}
\sum_{\ell=0}^p\norm{A^{p-\ell}u^{(\ell+1)}}_{L^1(0,T;\HH)},\quad nh\leq T,
\end{equation*}
where $p=1$ under Assumption 2.i and $p=2$ under Assumption 2.ii.
\end{thm}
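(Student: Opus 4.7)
The strategy is Lady Windermere's fan applied to the auxiliary scheme $R$ rather than to $S$ directly, since $S$ itself is not Lipschitz. Because $u$ is a classical solution we have $u(t)\in\D(A)\cap\D(F)$ for every $t\in[0,T]$, and therefore $\varphi(I-f)u(t)=u(t)$; combined with the identity $S^j\varphi=\varphi R^j$ this yields $S^n\eta=\varphi R^n(I-f)\eta$. Introducing the shifted trajectory $w(t)=(I-f)u(t)$ we then obtain
\begin{equation*}
u(nh)-S^n\eta=\varphi\bigl[w(nh)-R^nw(0)\bigr],
\end{equation*}
so Lipschitz continuity of $\varphi$, the standard telescoping identity for $w(nh)-R^nw(0)$, and Lemma~\ref{lem:stab} collapse the problem to summing the local defects $d_k:=w(t_{k+1})-Rw(t_k)$, which simplify to $d_k=(I-f)u(t_{k+1})-(I+a)\alpha(I+f)u(t_k)$ for $0\leq k\leq n-1$.

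The next step is to recast $d_k$ as a sum of Bochner integrals of time derivatives of $u$. The resolvent identities $(I+a)\alpha=2\alpha-I$ and $\alpha(I+f)-I=\tfrac{h}{2}\alpha(A+F)$, together with $(A+F)u=\dot u$, give
\begin{equation*}
d_k=[u(t_{k+1})-u(t_k)]-h\alpha\dot u(t_k)-\tfrac{h}{2}[Fu(t_{k+1})-Fu(t_k)].
\end{equation*}
The non-Lipschitz $F$-difference is then eliminated via $Fu=\dot u-Au$ and closedness of $A$, which produces $Fu(t_{k+1})-Fu(t_k)=\int_{t_k}^{t_{k+1}}(\ddot u-A\dot u)\,ds$. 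A parallel rewrite of the first two terms, again using closedness together with $u(t_{k+1})-u(t_k)\in\D(A)$ and an elementary Taylor step, transforms them into $-\tfrac{h}{2}\alpha\int_{t_k}^{t_{k+1}}A\dot u\,ds+\alpha\int_{t_k}^{t_{k+1}}(t_{k+1}-r)\ddot u(r)\,dr$, so that altogether $d_k$ is expressed through three explicit integrals featuring only $A\dot u$ and $\ddot u$, with $\alpha$ or $(I-\alpha)$ in front.

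Under Assumption~2.i I bound each of these integrals crudely using $L[\alpha]\leq 2$ and $\norm{I-\alpha}\leq L[\alpha]+1\leq 3$, yielding $\norm{d_k}\leq Ch(\norm{A\dot u}_{L^1(t_k,t_{k+1})}+\norm{\ddot u}_{L^1(t_k,t_{k+1})})$ and, upon summation, the first-order global estimate. Under Assumption~2.ii I perform one further round of resolvent manipulation and integration by parts: the remaining factor $(I-\alpha)=-\alpha a$ can now be pushed through $A\dot u$ (since $A^2\dot u\in L^1$); the two $\ddot u$-integrals recombine through the splitting $\alpha X-Y=\alpha(X-Y)+(\alpha-I)Y$ into $\alpha\int(t_{k+1/2}-r)\ddot u\,dr+\tfrac{h^2}{4}\alpha\int A\ddot u\,dr$, and the Peano-kernel form of the trapezoidal-rule defect bounds the first of these by $\tfrac{h^2}{8}\norm{u^{(3)}}_{L^1(t_k,t_{k+1})}$. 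This upgrades the local error to $O(h^2)$ and the global error to second order.

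The principal obstacle throughout is the unbounded cross term $\tfrac{h^2}{4}AF[u(t_{k+1})-u(t_k)]$ implicit in any expansion of $(I\pm a)(I\pm f)$: since $F$ is not assumed Lipschitz it cannot be Taylor-expanded directly, and since $A$ is unbounded a naive estimate would need $A^2u$, which is not available even under Assumption~2.ii. The remedy, applied repeatedly, is to trade every $F$-difference for a Bochner integral of $\ddot u-A\dot u$ via the governing equation, and then to push $A$ inside integrals using closedness, so that only the norms $\norm{A^{p-\ell}u^{(\ell+1)}}_{L^1}$ listed in Assumption~2 appear in the final estimate.
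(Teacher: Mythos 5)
Your proposal is correct and follows essentially the same route as the paper: telescoping through the Lipschitz auxiliary operator $R$ via $S^j\varphi=\varphi R^j$ and Lemma~\ref{lem:stab}, reducing to the local defect $(I-f)u(t_{j})-(I+a)\alpha(I+f)u(t_{j-1})$, eliminating the $F$-differences through $Fu=\dot u-Au$, and pushing $A$ inside Bochner integrals by closedness so that only the norms in Assumption~2 appear. The only difference is cosmetic: the paper groups the defect directly as $\alpha q_j+s_j$ with $q_j$ the symmetric trapezoidal error and $s_j=a\alpha(fu(t_j)-fu(t_{j-1}))$, whereas you reach the same three integrals (with the same constants, summing to the stated $\tfrac52 h^p$) by expanding $(I+a)\alpha=2\alpha-I$ and recombining the $\ddot u$-integrals into the Peano-kernel form at the second-order stage.
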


\begin{proof}
First, we expand the global error as the telescopic sum
\begin{equation*}
\begin{split}
u(nh)-S^n\eta& =\sum_{j=1}^n S^{n-j}u(t_j)-S^{n-j+1}u(t_{j-1})\\
		     & =\sum_{j=1}^n \varphi R^{n-j}(I-f)u(t_j)-\varphi R^{n-j}(I-f)Su(t_{j-1}),
\end{split}
\end{equation*}
where $t_j=jh$. This expansion together with Lemma \ref{lem:stab} yields that
\begin{equation}\label{eq:error}
\begin{split}
\norm{u(nh)-S^n\eta}& \leq\sum_{j=1}^n  L[\varphi] L[R^{n-j}]\norm{(I-f)u(t_j)-(I-f)Su(t_{j-1})}\\
				& \leq\ee^{t_nM} \sum_{j=1}^n\norm{(I-f)u(t_j)-(I-f)Su(t_{j-1})},
\end{split}
\end{equation}
where $M=3/2\,(M[A]+M[F])$. Next, we seek a suitable representation of the difference
\begin{equation*}
d_j=(I-f)u(t_j)-(I-f)Su(t_{j-1}).
\end{equation*}
The second term can be written as
\begin{equation*}
(I-f)Su(t_{j-1})=\alpha\bigl(I+(a+f)\bigr)u(t_{j-1})+a\alpha fu(t_{j-1}).
\end{equation*}
In order to match the first term $(I-f)u(t_j)$ with the above expression we expand the identity in terms of $a$ and $\alpha$, and obtain that
\begin{equation}\label{eq:expan}
\begin{split}
(I-f)u(t_j)& =(\alpha-\alpha a-f)u(t_j)\\
	     & =\alpha\bigl(I-(a+f)\bigr)u(t_j)+(\alpha-I) f u(t_j)\\
	     & =\alpha\bigl(I-(a+f)\bigr)u(t_j)+a\alpha f u(t_j).
\end{split}
\end{equation}
This gives us the representation $d_j=\alpha q_j +s_j$, where
\begin{equation*}
\begin{split}
q_j& =u(t_j)-u(t_{j-1})-(a+f)u(t_j)-(a+f)u(t_{j-1})\qquad\text{and}\\
s_j& =a\alpha\bigl(fu(t_j)-fu(t_{j-1})\bigr).
\end{split}
\end{equation*}
By Assumption 2, the solution $u$ is an element in $W^{\ell,1}(0,T;\HH)$, with
$\ell=2$ or $3$,  and the $q_j$ term can therefore be written as
\begin{equation*}
\begin{split}
q_j& =\int_{t_{j-1}}^{t_j}\dot{u}(t)\,\mathrm{d}t-\sfrac 12h\bigl(\dot{u}(t_j)+\dot{u}(t_{j-1})\bigr)\\
     & =h\int_{t_{j-1}}^{t_j}(\sfrac12-\sfrac{t-t_{j-1}}{h})\ddot{u}(t)\,\mathrm{d}t\\
     & =h^2\int_{t_{j-1}}^{t_j}\sfrac12\sfrac{t-t_{j-1}}{h}\sfrac{t-t_j}{h}u^{(3)}(t)\,\mathrm{d}t.
\end{split}
\end{equation*}
Hence, the term $q_j$ is simply the local error of the trapezoidal rule and can either be expressed as a first or a second order term in $h$, depending on
the regularity of the solution~$u$. Furthermore, the splitting error $s_j$ can also be interpreted as a first or a second order term in $h$. This follows as
\begin{equation*}
\begin{split}
s_j& =a\alpha\bigl(\sfrac12 h[\dot{u}(t_j)-\dot{u}(t_{j-1})]-a[u(t_j)-u(t_{j-1})]\bigr)\\
     & =\sfrac12 h a\alpha\bigl(\int_{t_{j-1}}^{t_j}\ddot{u}(t)\,\mathrm{d}t-A\int_{t_{j-1}}^{t_j}\dot{u}(t)\,\mathrm{d}t\bigr)\\
     & =\sfrac14 h^2 \alpha\bigl(A\int_{t_{j-1}}^{t_j}\ddot{u}(t)\,\mathrm{d}t-A^2\int_{t_{j-1}}^{t_j}\dot{u}(t)\,\mathrm{d}t\bigr).
\end{split}
\end{equation*}
Note that the operators $A^\ell$ can be interchanged with the integrations, as $A$ is closed (via Assumption 1) and the integrated functions are assumed to
be sufficiently regular (Assumption 2).

Finally, the above representations of the terms $q_j$ and $s_j$ together with the observations that $L[\alpha]\leq 1/(1-1/2\,hM[A])\leq2$ and
$L[a\alpha]=L[\alpha-I]\leq 3$ when $hM[A]\leq 1$ give us the bound
\begin{equation*}
\norm{d_j} \leq \sfrac 52\,h^p\,\int_{t_{j-1}}^{t_j} \sum_{\ell=0}^p\norm{A^{p-\ell}u^{(\ell+1)}(t)}\,\mathrm{d}t,
\end{equation*}
with $p=1$ or $2$. Combining \eqref{eq:error} with the above inequality yields the sought after error bound.
\qquad\end{proof}

If the regularity prescribed in Assumption 2 is not present one can still obtain convergence of the Peaceman--Rachford approximation to the mild solution. This follows by a Lax-type theorem due to Br\'ezis and Pazy \cite{BRE}. Note that the $o(1)$-convergence of the scheme is given in \cite[Theorem 2]{LIO}, when $M[A]$ and $M[F]$ are equal to zero. However, in the current notation we are able to give a significantly shorter proof.
\begin{thm}\label{thm:conv0}
Consider the mild solution $u$ of the semilinear evolution equation~\eqref{eq:evolv} and its approximation $S^n\eta$ by the Peaceman--Rachford scheme
\eqref{eq:PR}. If Assumption~1 is valid and $\D(A)\cap\D(F)$ is dense in $\HH$, then
\begin{equation*}
\lim_{n\to\infty} S^n\eta = u(t),
\end{equation*}
for every $\eta\in\D(F)$ and $t\geq 0$.
\end{thm}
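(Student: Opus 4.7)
The plan is to apply a Lax-type theorem due to Br\'ezis and Pazy to the auxiliary operator $R$ of Section~\ref{sec:prel}, and transfer the conclusion to $S$ via the intertwining $S^j\varphi=\varphi R^j$. The scheme-specific work is to verify the two hypotheses of that theorem: stability, already provided by Lemma~\ref{lem:stab}, and a consistency condition identifying $A+F$ as the ``generator'' of $R$ on a dense core.

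For the consistency step, I would exploit the identities $(I+a)\alpha=I+2a\alpha$ and $(I+f)\varphi=I+2f\varphi$ to expand
\begin{equation*}
\frac{R\eta-\eta}{h}=A\alpha\eta+F\varphi\eta+hA\alpha F\varphi\eta.
\end{equation*}
Each summand is then handled with standard Yosida-type properties of maximal dissipative operators. For $\eta\in\D(A)\cap\D(F)$ one has $A\alpha\eta\to A\eta$ (by linearity of $A$ together with $\alpha\to I$ strongly) and $F\varphi\eta\to F\eta$ (strong convergence of the Yosida approximation of $F$), while the cross term is $o(1)$ because $hA\alpha=2(\alpha-I)$ vanishes strongly along every convergent sequence, applied here to $F\varphi\eta\to F\eta$. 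Combined with the quasi-contractive bound of Lemma~\ref{lem:stab}, this consistency on the dense core feeds the Br\'ezis--Pazy theorem and delivers $R^n\eta\to\ee^{t(A+F)}\eta$ for every $\eta\in\HH=\overline{\D(A)\cap\D(F)}$ along $nh\to t$.

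To translate this back to $S$, I would use $\varphi(I-f)\eta=\eta$, valid for $\eta\in\D(F)$, to write $S^n\eta=\varphi R^n(I-f)\eta$ and split
\begin{equation*}
S^n\eta-u(t)=\varphi\bigl[R^n(I-f)\eta-R^n\eta\bigr]+\varphi\bigl[R^n\eta-u(t)\bigr]+\bigl[\varphi u(t)-u(t)\bigr].
\end{equation*}
The first bracket is $O(h)$ because $\|(I-f)\eta-\eta\|=(h/2)\|F\eta\|$ and Lemma~\ref{lem:stab} bounds $L[R^n]$ uniformly on $[0,T]$, the middle one vanishes by the Br\'ezis--Pazy step above, and the third by $u(t)\in\overline{\D(F)}$ together with the strong convergence $\varphi\to I$ on that closure.

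The chief obstacle is selecting the form of the Br\'ezis--Pazy theorem that applies here: one needs a version admitting the quasi-contractive (rather than contractive) stability of Lemma~\ref{lem:stab} and whose consistency hypothesis is met by the pointwise strong limit $(R\eta-\eta)/h\to(A+F)\eta$ on the core. Once that reference is in hand, the remaining work collapses to the Yosida-type computations sketched above together with standard resolvent manipulations already in use throughout the paper.
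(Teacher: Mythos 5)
Your proposal is correct and follows essentially the same route as the paper's proof: the same consistency expansion of $R$ via $(I+a)\alpha=I+2a\alpha$ and $(I+f)\varphi=I+2f\varphi$ with the cross term killed by $hA\alpha=2(\alpha-I)$ acting on the convergent sequence $F\varphi\eta\to F\eta$, the same appeal to Lemma~\ref{lem:stab} for stability and to the Br\'ezis--Pazy theorem (the paper uses \cite[Corollary~4.3]{BRE}, which does admit the quasi-contractive bound $L[R]\leq 1+Ch+o(h)$), and the same three-term splitting of $u(t)-S^n\eta$ based on $S^n\eta=\varphi R^n(I-f)\eta$. There is nothing to add.
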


\emph{Proof.}
The operator $F$ is assumed to be maximal dissipative and densely defined, as $\overline{\D(A)\cap\D(F)}\subseteq\overline{\D(F)}$, which implies the limits
\begin{equation*}
\lim_{h\to 0} F\varphi v= Fv\quad\text{and}\quad\lim_{h\to 0} \varphi w= w,
\end{equation*}
for every $v\in\D(F)$ and $w\in \HH$; see, e.g., the proof of \cite[Proposition 11.3]{DEI}. The same type of limits obviously also hold for the operator $A$. Next, consider the auxiliary scheme \eqref{eq:aux}, which can be reformulated as
\begin{equation*}
R = (I+2a\alpha)(I+2f\varphi)=I+2a\alpha+2f\varphi+4(\alpha-I)(f\varphi-f)+4(\alpha-I)f
\end{equation*}
on $\D(F)$. Hence, the auxiliary scheme is consistent, i.e.,
\begin{equation*}
\lim_{h\to 0}\sfrac1h(R-I)v = (A+F)v,
\end{equation*}
for every $v\in \D(A)\cap\D(F)$. Moreover, Lemma \ref{lem:stab} yields stability in the sense that  $L[R]\leq 1+Ch+o(h)$, and the auxiliary approximation $R^n\eta$ is therefore convergent, for every $\eta\in\HH$, by \cite[Corollary~4.3]{BRE}.

The sought after convergence of the Peaceman--Rachford approximation is then obtained for all $\eta\in\D(F)$ via the error bound
\begin{equation*}
\norm{u(t)-S^n\eta}\leq \norm{(I-\varphi)u(t)}+L[\varphi]\norm{u(t)-R^n\eta}+L[\varphi]L[R^n]\norm{\eta-(I-f)\eta}.\qquad\endproof
\end{equation*}

\section{Convergence of the Lie scheme}\label{sec:Lie}

With the machinery of the previous sections in place, we can also derive a global error bound for the Lie scheme, where a single time step is given by the
operator
\begin{equation}\label{eq:Lie}
S=(I-hF)^{-1}(I-hA)^{-1}.
\end{equation}
For this analysis we replace Assumption 2 by the one stated below.

{\sc Assumption 3.}
\emph{The evolution equation \eqref{eq:evolv} has a classical solution $u$ such that $u\in W^{2,1}(0,T;\HH)$ and $A\dot{u}(t), A^2u(t)\in\HH$ for every
time $t\in[0,T]$.}

In this context, we obtain first order convergence for the Lie approximation:

\begin{thm}\label{thm:convLie}
Consider the Lie discretization \eqref{eq:Lie} of the semilinear evolution equation \eqref{eq:evolv}. If Assumptions 1 and 3 hold and
$h\max\{M[A],M[F]\}\leq 1/2$, then the global error of the Lie approximation can be bounded as
\begin{equation*}
\norm{u(nh)-S^n\eta}\leq 2\,h\, \ee^{2T(M[A]+M[F])}\bigl(\norm{\ddot{u}}_{L^1(0,T;\HH)}+T\norm{A\dot{u}-A^2u}_{L^\infty(0,T;\HH)}\bigr),
\end{equation*}
where $nh\leq T$.
\end{thm}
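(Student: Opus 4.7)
The plan is to mimic the proof of Theorem~\ref{thm:conv12}, with two simplifications afforded by the Lie operator. Using the analogous abbreviations $a=hA$, $\alpha=(I-a)^{-1}$, $f=hF$, $\varphi=(I-f)^{-1}$ (now without the factor $1/2$), the time stepping operator is simply $S=\varphi\alpha$. Since $S$ is already a composition of two resolvents, it is directly Lipschitz continuous and no auxiliary operator $R$ is needed. The stability estimate $L[S^n]\leq\ee^{2nh(M[A]+M[F])}$ follows from the resolvent bounds $L[\alpha]\leq 1/(1-hM[A])$ and $L[\varphi]\leq 1/(1-hM[F])$, combined with the inequality $1/(1-x)\leq\ee^{2x}$ on $[0,1/2]$.

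With stability in hand, I would expand the global error as the telescopic sum
\begin{equation*}
u(nh)-S^n\eta=\sum_{j=1}^n S^{n-j}\bigl[u(t_j)-Su(t_{j-1})\bigr]
\end{equation*}
and reduce the analysis to bounding the local errors $d_j=u(t_j)-Su(t_{j-1})$. As in the Peaceman--Rachford case I would use $u(t_j)=\varphi(I-f)u(t_j)$ to factor out $\varphi$, and then invoke the linearity of $\alpha$ together with $(I-f)u(t_j)\in\D(A)$ to rewrite
\begin{equation*}
d_j=\varphi\alpha\bigl[(I-a)(I-f)u(t_j)-u(t_{j-1})\bigr].
\end{equation*}
Expanding $(I-a)(I-f)u(t_j)=u(t_j)-h\dot{u}(t_j)+h^2\bigl(A\dot{u}(t_j)-A^2u(t_j)\bigr)$ yields the splitting $d_j=\varphi\alpha(q_j+s_j)$, where $q_j=u(t_j)-u(t_{j-1})-h\dot{u}(t_j)$ is the implicit Euler quadrature remainder and $s_j=h^2\bigl(A\dot{u}(t_j)-A^2u(t_j)\bigr)$ is the pure splitting error.

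Writing $q_j$ as $-\int_{t_{j-1}}^{t_j}(s-t_{j-1})\ddot{u}(s)\,\mathrm{d}s$ gives $\sum_j\norm{q_j}\leq h\norm{\ddot{u}}_{L^1(0,T;\HH)}$, while the obvious bound for $s_j$ summed over $n\leq T/h$ steps produces $\sum_j\norm{s_j}\leq Th\norm{A\dot{u}-A^2u}_{L^\infty(0,T;\HH)}$. Combining these with the Lipschitz factors for $\varphi$ and $\alpha$ (each at most $2$ under the assumption $h\max\{M[A],M[F]\}\leq 1/2$) and the global stability estimate yields the desired error bound. The main obstacle, just as in Theorem~\ref{thm:conv12}, is justifying the manipulations in the local-error representation: one must verify that $Fu(t)\in\D(A)$, which follows because Assumption~3 ensures $AFu(t)=A\dot{u}(t)-A^2u(t)\in\HH$, so that the identity $(I-f)u(t_j)=\alpha(I-a)(I-f)u(t_j)$ is legitimate and the operator $\alpha$ can be pulled out linearly.
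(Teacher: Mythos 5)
Your proposal follows essentially the same route as the paper: the same rescaled abbreviations $a=hA$, $f=hF$, the stability bound $L[S^j]\leq\ee^{2jh(M[A]+M[F])}$, a telescopic expansion of the global error, and the identical local-error decomposition into the backward-Euler quadrature remainder $q_j$ and the splitting defect $s_j=h^2(A\dot{u}(t_j)-A^2u(t_j))$ justified by $Fu(t)\in\D(A)$ from Assumption~3 (the equalities $S^{n-j}[x-y]$ and $\varphi\alpha[x-y]$ should of course be read as Lipschitz estimates, since $S$ and $\varphi$ are nonlinear). The one bookkeeping caveat is that taking the local prefactor $L[\varphi]L[\alpha]\leq 4$ yields the constant $4$ rather than the stated $2$; to recover $2$ one absorbs $L[\varphi]\leq 1/(1-hM[F])\leq\ee^{2hM[F]}$ into the exponential stability factor, exactly as the paper does, leaving only the single factor $L[\alpha]\leq 2$.
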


\emph{Proof.}
In order to mimic the earlier convergence proof, we introduce the abbreviations
\begin{equation*}
a=hA,\quad \alpha=(I-a)^{-1}, \quad f=hF\quad\text{and}
\quad \varphi=(I-f)^{-1}.
\end{equation*}
The Lie scheme then reads as $S=\varphi\alpha$, and its stability follows by
\begin{equation*}
L[S^j]\leq L[\varphi]^jL[\alpha]^j\leq \ee^{2jh(M[A]+M[F])}.
\end{equation*}
We again expand the global error in a telescopic sum and obtain the bound
\begin{equation}\label{eq:errorLie}
\begin{split}
\norm{u(nh)-S^n\eta}& \leq\sum_{j=1}^n  \norm{S^{n-j}u(t_j)-S^{n-j+1}u(t_{j-1})}\\
				& \leq \sum_{j=1}^n L[S^{n-j}]L[\varphi]\norm{(I-f)u(t_j)-(I-f)Su(t_{j-1})}\\
				& \leq \ee^{2t_n(M[A]+M[F])}\sum_{j=1}^n\norm{(I-f)u(t_j)-\alpha u(t_{j-1})},
\end{split}
\end{equation}
where $t_j=jh$.  With the expansion \eqref{eq:expan} of the term $(I-f)u(t_j)$, we can once more express the difference
\begin{equation*}
d_j=(I-f)u(t_j)-\alpha u(t_{j-1})=\alpha q_j+s_j
\end{equation*}
in terms of a quadrature error $q_j$ and a splitting error $s_j$, where
\begin{equation*}
\begin{split}
q_j& =\int^{t_j}_{t_{j-1}} \dot{u}(t)\,\mathrm{d}t-h\dot{u}(t_j)=h\int^{t_j}_{t_{j-1}} \sfrac{t_{j-1}-t}{h}\ddot{u}(t)\,\mathrm{d}t
           \qquad\text{and}\\[7pt]
s_j& =a\alpha fu(t_j)=h^2\alpha A\bigl(\dot{u}(t_j)-Au(t_j)\bigr).
\end{split}
\end{equation*}
The sought after error bound then follows as
\begin{equation*}
\norm{d_j}\leq 2\,h\,\bigl(\int_{t_{j-1}}^{t_j} \norm{\ddot{u}(t)}\,\mathrm{d}t+h\norm{A\dot{u}(t_j)-A^2u(t_j)}\bigr).\qquad\endproof
\end{equation*}

A somewhat surprising result is that the Peaceman--Rachford scheme requires less regularity than the Lie scheme in order to obtain first order
convergence, as no requirement is made regarding the term $A^2u(t)$; compare Assumptions 2.i and 3.

Even though the Lie scheme may have a less beneficial error structure, it has a significant advantage over most schemes, namely, it is stable even if $\HH$
is merely a Banach space and the derived global error bound is still valid in a Banach space framework. The necessary modification is to generalize the
dissipativity property \eqref{eq:diss} as follows: Let $\mathcal{X}$ be a real Banach space. A nonlinear operator $G:\D(G)\subseteq\mathcal{X}\to
\mathcal{X}$ is said to be dissipative if and only if
\begin{equation*}
[Gu-Gv,u-v]\leq M[G]\norm{u-v}^2
\end{equation*}
for every $u,v\in\D(G)$. Here,
$[\cdot,\cdot]:\mathcal{X}\times\mathcal{X}\to\mathbb{R}$ denotes the (left) semi-inner product \cite[p.~96]{DEI} defined as
\begin{equation*}
[u,v]=\norm{v}\lim_{\varepsilon\to 0^-}\sfrac 1\varepsilon(\norm{v+\varepsilon u}-\norm{v}).
\end{equation*}
With this extended definition of dissipativity, we still have that the resolvent of a maximal dissipative operator $G$ exists and
$L[(I-hG)^{-1}]\leq 1/(1-hM[G])$. Hence, the Lie scheme \eqref{eq:Lie} is well defined and Theorem \ref{thm:convLie} holds by the very same proof.

Note that the Peaceman--Rachford results of \S\ref{sec:PR} can not be extended to this Banach spaces framework, as the needed generalization of
Lemma \ref{lem:stab} is not true. The reason for this is that the terms $L[(I+f)\varphi]$ and $L[(I+a)\alpha]$ are, in general, not of the form
$1+\mathcal{O}(h)$  when the Hilbert structure is lost. A concrete example of a maximal dissipative operator $A$ on a Banach space with $M[A]=0$ and $L[(I+a)\alpha]\geq 1.2$, for all $h>0$, can be found in \cite[Appendix 2]{RAS}.

\section{Applications}\label{sec:appl}

We conclude with two examples of reaction-diffusion systems which fit into the framework of maximal dissipative operators and for which the Peaceman--Rachford scheme becomes an efficient temporal discretization.

\begin{ex}\label{ex:phase} {\rm
Consider the equation system
\begin{equation}\label{eq:phase}
\left\{\begin{aligned}
\dot{\theta}+\ell\dot{\phi}& =  \Delta \theta,\\
\dot{\phi} & = \Delta \phi +\phi-\phi^3+\theta,
\end{aligned}\right.
\end{equation}
where $\ell$ is a positive constant and the equation is given on  $\Omega\times[0,T]\subseteq \mathbb{R}^d\times \mathbb{R_+}$, $d=2,3$, and is equipped with suitable boundary and initial conditions. Equations of this form have been proposed, e.g., when modeling solidification
processes~\cite{CAG}. In the solidification model, $\theta$ represents the temperature and the continuously varying order parameter $\phi$ describes the transition of the material from the liquid phase ($\phi\approx 1$) to the solid phase ($\phi\approx -1$).

By the variable change $\psi=\theta+\ell\phi$, the system \eqref{eq:phase} can be reformulated as a semilinear evolution equation \eqref{eq:evolv}, where
$u=[\psi,\phi]^\mathrm{T}$,
\begin{equation*}
A=\left(\begin{matrix} 1 & -\ell\\ 0 & 1\end{matrix}\right)\Delta
\qquad\text{and}\qquad
Fu=\left[\begin{array}{c} 0\\ (1-\ell)\phi-\phi^3+\psi\end{array}\right].
\end{equation*}
Evaluating a time step of the Peaceman--Rachford  scheme \eqref{eq:PR} then consists of twice employing a standard solver for elliptic problems of the form $(I-h/2\,\Delta)v=w$, in order to evaluate the actions of the linear resolvent $(I-h/2\,A)^{-1}$, and the nonlinear resolvent $(I-h/2\,F)^{-1}$ can be computed analytically. Global errors and the presence of second order convergence are exemplified in Figure~\ref{fig:error}.

\begin{figure}[t]
\centering
\subfigure[]{
    \includegraphics[width=0.44\textwidth ]{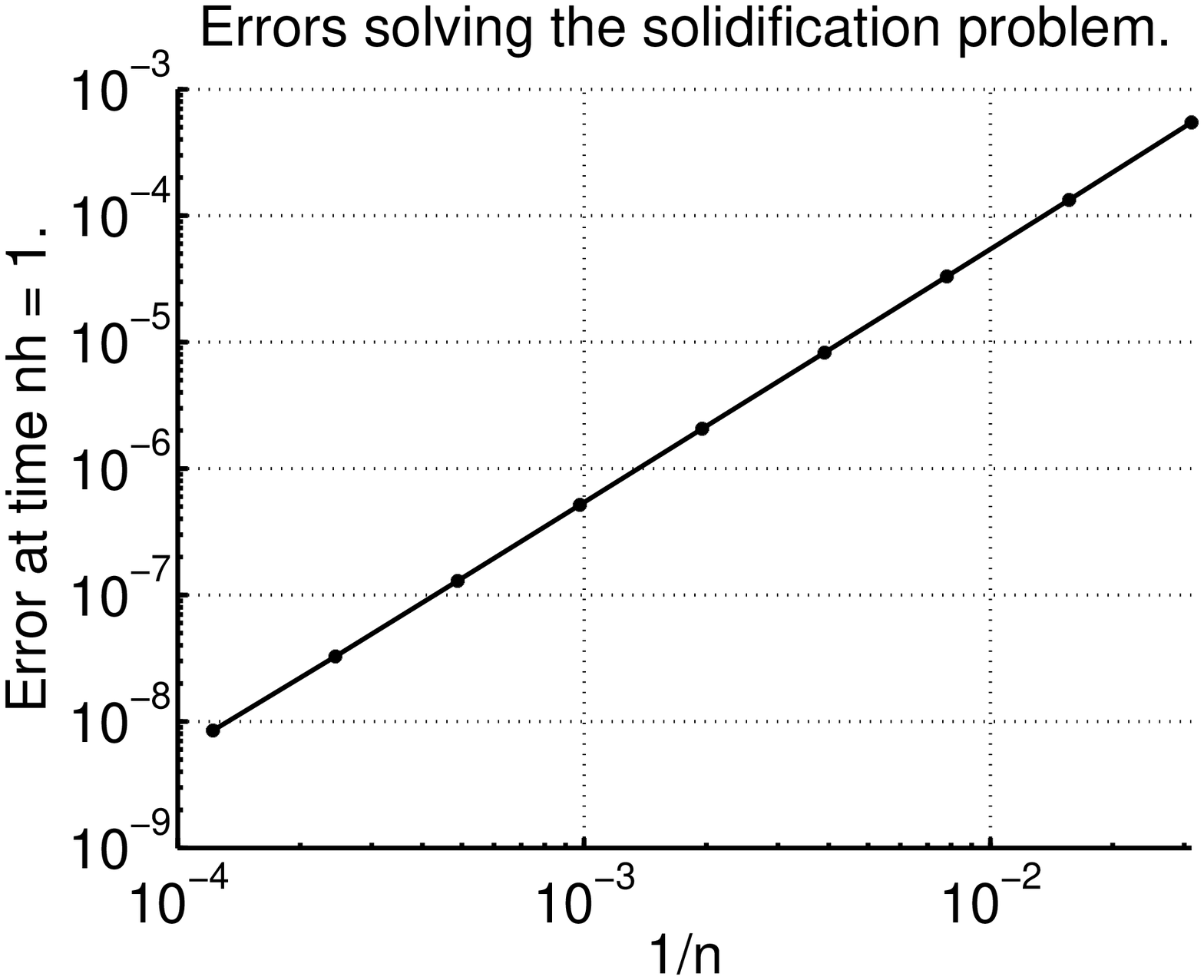}
	\label{fig:cag_conv}}
\subfigure[]{
    \includegraphics[width=0.44\textwidth ]{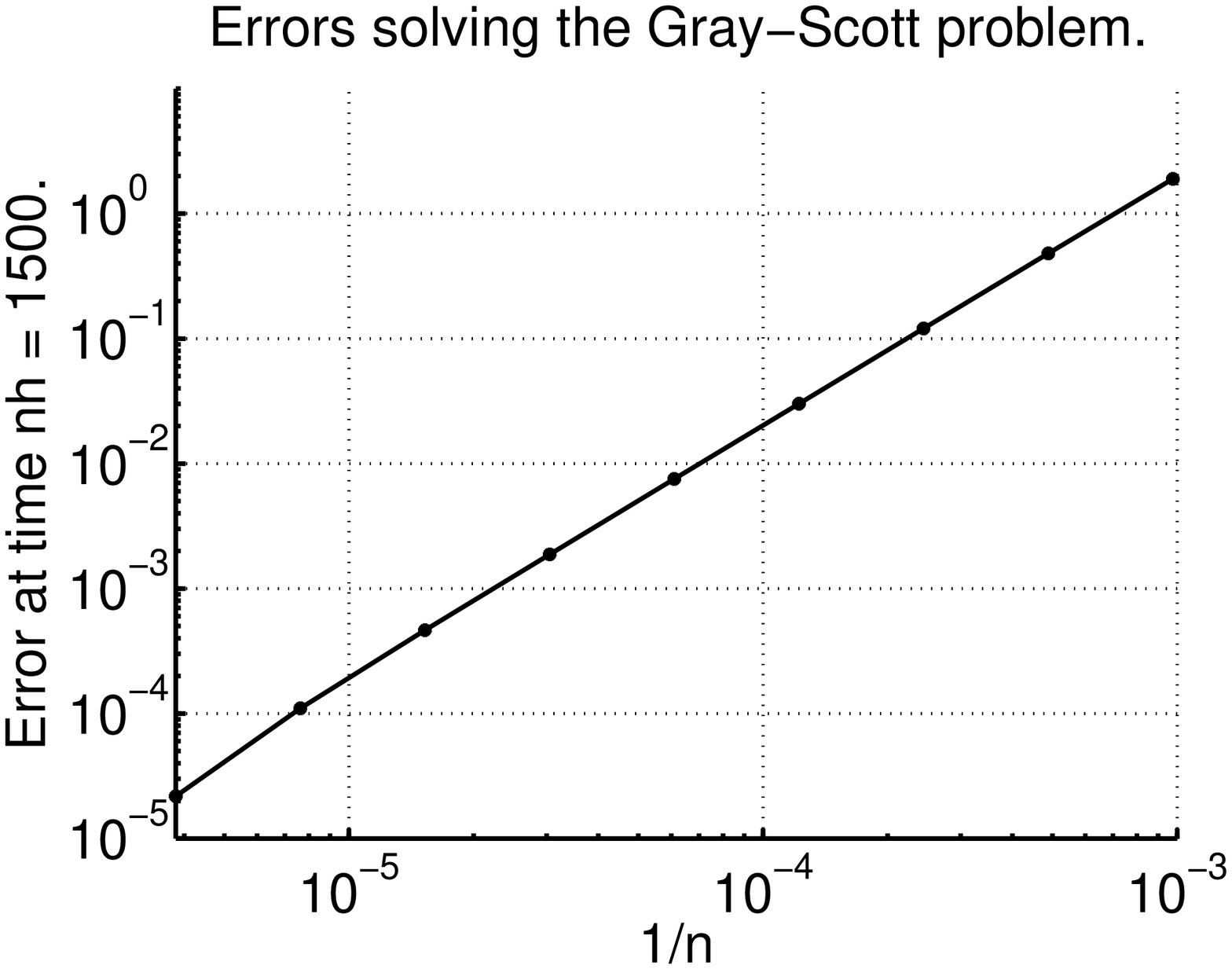}
	\label{fig:gs_conv}}
\caption{To show convergence for the Peaceman--Rachford scheme, it is applied to the two-dimensional solidification problem~\eqref{eq:phase} and the Gray--Scott pattern formation problem~\eqref{eq:gs}. In Figures \ref{fig:cag_conv} and \ref{fig:gs_conv} the global errors, at times $nh=1$ and $1500$, measured in the norms induced by the inner products \eqref{eq:cag_inner} and \eqref{eq:gs_inner}, respectively, are plotted against the inverse of the number of time steps $1/n$. Second order convergence of the scheme is observed. Both equations are equipped with periodic boundary conditions and given on the domain  $\Omega = (-\pi, \pi)^2$ with an equidistantly spaced grid  with $2^9$ nodes in each dimension. The reference solution is found by the same scheme but on a finer grid, $2^{10}$ nodes in each space dimension and $2^{19}$ time steps. Parameters used are $l=1/2$, $d_1 = 8\e{-4}$, $d_2 = 4\e{-4}$, $l_1 = 0.024$ and $l_2 = 0.084$. Initial values used for the solidification problem are $\theta_0(x) = 1$ and $\phi_0(x) = \exp{(-20(x_1^2+x_2^2/6))} + \exp{(-20(x_1^2/6+x_2^2))} - 1$. For the Gray--Scott problem we define $(u_2)_0$ as a sum of four translated ``humps'' with midpoints $y = (\pm \pi/10, \pm \pi/10)$ and radius $\epsilon = \pi/10$. The sum is scaled by a factor $\exp{(1)}/4$ and the ``hump'' is defined by $g_{y,\epsilon}(x) = \exp{(-\epsilon^2/(\epsilon^2-|x-y|^2))}$ if $|x-y| < \epsilon$, $0$ otherwise. See Figure~\ref{fig:patt} for a contour plot. The first component is defined by $(u_1)_0(x) = 1-2(u_2)_0(x)$. For both problems the actions of $(I-h/2\,A)^{-1}$ are efficiently evaluated with the help of an {\sc fft}-algorithm. The actions of the nonlinear resolvents $(I-h/2\,F)^{-1}$ can also be efficiently evaluated as they give rise to cubic equations which can be solved analytically.}
\label{fig:error}
\end{figure}

In order to interpret $A$ and $F$ as maximal dissipative operators, we choose to work in the Hilbert space $\HH=[L^2(\Omega)]^2$
equipped with the inner product
\begin{equation} \label{eq:cag_inner}
(u_1,u_2)=(\psi_1,\psi_2)_{ L^2(\Omega)}+\ell^2(\phi_1,\phi_2)_{ L^2(\Omega)}.
\end{equation}
Assume that the domain $\Omega$ has a sufficiently regular boundary and the system is equipped with, e.g., periodic boundary conditions or homogeneous Dirichlet or Neumann boundary conditions. The Laplacian $\Delta:\D(\Delta)\subseteq L^2(\Omega)\to L^2(\Omega)$ is then a maximal dissipative operator, with $M[\Delta]=0$; compare with \cite[Section~II.2]{TEM}. Here, the domain $\D(\Delta)$ can be identified as
$H^2_{per}(\Omega), H^2(\Omega)\cap H^1_{0}(\Omega)$ or $\{u\in H^2(\Omega):\gamma_1u=0\}$, when periodic, Dirichlet or Neumann conditions are imposed, respectively.

Let $\D(A)=[\D(\Delta)]^2$, then the operator $A:\D(A)\subseteq\HH\to\HH$ is maximal dissipative, with $M[A]=0$. The range condition
\eqref{eq:rang} trivially holds, as $\mathcal{R}(I-h\Delta)=L^2(\Omega)$, and the dissipativity \eqref{eq:diss} follows by the inequality
\begin{equation*}
(Au,u)\leq -\sfrac12\bigl(\norm{\nabla\psi}^2_{L^2(\Omega)}+\ell^2\norm{\nabla\phi}^2_{L^2(\Omega)}\bigr).
\end{equation*}

The operator $F:\D(F)\subseteq\HH\to\HH$ fulfills the range condition whenever its second component, which we denote by $F_2$, satisfies it on
$L^2(\Omega)$ for a fixed $\psi\in L^2(\Omega)$. This can be proven by, e.g., observing that the operator $I-h\hat{F}_2:L^4(\Omega)\to
L^4(\Omega)^*$ is surjective when $h(1-\ell)\leq 1$, where
\begin{equation*}
\hat{F}_2:\phi\mapsto\int_{\Omega} \bigl((1-\ell)\phi-\phi^3+\psi\bigr)(\cdot)\,\mathrm{d}x.
\end{equation*}
The surjectivity follows as $I-h\hat{F}_2$ fulfills the hypotheses of the Browder--Minty theorem \cite[Theorem~26.A]{ZEI}. The operator $F_2(\psi,\cdot)$
can then be identified as the restriction of  $\hat{F}_2$ to the set
\begin{equation*}
\{\phi\in L^4(\Omega):\hat{F}_2\phi\in L^2(\Omega)^*\}=L^6(\Omega),
\end{equation*}
i.e., $\D(F)=L^2(\Omega)\times L^6(\Omega)$, and the range condition then holds for $F$ on $\mathcal{\HH}$ by construction. Finally, $F$ is also
dissipative, as
\begin{equation*}
(Fu_1-Fu_2,u_1-u_2) \leq (\sfrac 32-\ell)\ell^2\norm{\phi_1-\phi_2}^2_{L^2(\Omega)}+\ell^2\norm{\psi_1-\psi_2}^2_{L^2(\Omega)}
                                          -\sfrac14\ell^2\norm{\phi_1-\phi_2}^4_{L^4(\Omega)}.
\end{equation*}
Hence, the operator $F$ is maximal dissipative, with $M[F]\leq\max\{3/2-\ell,\ell^2\}$.

The maximal dissipativity of $A+F$ follows by employing a standard perturbation result, as  done in the proof of \cite[Theorem~5.5]{BAR}.
Existence of a classical solution to \eqref{eq:phase} and further regularity results can be found \cite[Section~3]{CAG}, in the context of Dirichlet boundary
conditions.
}
\end{ex}

\begin{ex}\label{ex:react} {\rm
In the previous example the polynomial nonlinearity could be interpreted as a dissipative operator, due to the presence of the term $-\phi^3$.
However, even if this dissipative structure is not present one can still fit polynomial nonlinearities into the framework of maximal dissipative operators, by requiring further regularity and boundary condition compatibility.

Assume that the operator $A:\D(A)\subseteq\HH\to\mathcal\HH$ is maximal dissipative, and therefore also closed. The idea is to replace the Hilbert space $\HH$ by the domain $\D(A)$ which is again a Hilbert space, when equipped with the graph inner product
\begin{equation}\label{eq:gs_inner}
(u,v)_A=(Au,Av)+(u,v).
\end{equation}
The  operator $A:\D(A^2)\subseteq\D(A)\to\D(A)$ is still maximal dissipative, with the same constant $M[A]$. If the domain $\D(A)$ is a Banach algebra, then any polynomial nonlinearity $F$, with $F(0)=0$ in case $\D(A)$ lacks an identity element, maps $\D(A)$ into itself and is locally Lipschitz continuous on $\D(A)$, i.e., for every $r>0$ there exists an $L_r[F]\in[0,\infty)$ such that
\begin{equation*}
\norm{Fu-Fv}_A\leq L_r[F]\norm{u-v}_A\quad\text{for all}\quad u,v\in B_r=\{u\in\D(A):\norm{u}_A\leq r\}.
\end{equation*}
One can then introduce the truncation
\begin{equation*}
F_ru=\begin{cases}
Fu& \text{if}\quad u\in B_r,\\
F\bigl(r/\norm{u}_A\,u\bigr)& \text{if}\quad u\in \D(A)\backslash B_r,
\end{cases}
\end{equation*}
and the new operator  $F_r:\D(A)\to\D(A)$ is (globally) Lipschitz continuous. Hence, both $F_r$ and $A+F_r:\D(A^2)\subseteq\D(A)\to\D(A)$ are maximal dissipative, and the convergence results of \S\ref{sec:PR} and \S\ref{sec:Lie} are valid for all time intervals $[0,T(\eta)]$ for which the exact solution remains in $B_r$.

As an example, consider the evolution equation \eqref{eq:evolv}, with
\begin{equation*}
A=\left(\begin{matrix} d_1 & &\\  & \ddots &\\ & & d_s\end{matrix}\right)\Delta
\qquad\text{and}\qquad
Fu=\left[\begin{array}{c} p_1(u_1,\ldots,u_s)\\ \vdots \\ p_s(u_1,\ldots,u_s) \end{array}\right],
\end{equation*}
where $d_i$ are positive constants and $p_i$ are polynomials with $s$-arguments. For periodic boundary conditions and $\HH=[L^2(\Omega)]^s$ the operator $A:\D(A)\subseteq\HH\to\mathcal\HH$ is maximal dissipative when defined on the Banach algebra $\D(A)=[H^2_{per}(\Omega)]^s$.
For the Gray--Scott pattern formation model \cite[p.~21]{HUN2}, where
\begin{equation}\label{eq:gs}
p_1(u_1,u_2)=-u_1u_2^2+\ell_1(1-u_1)\qquad\text{and}\qquad p_2(u_1,u_2)=u_1u_2^2-\ell_2u_2,
\end{equation}
we can again give a closed expression for the nonlinear resolvent $(I-h/2\,F)^{-1}$~\cite[p.~133]{MUR}. Second order convergence for the related Peaceman--Rachford discretization is exemplified in Figure~\ref{fig:error}, and the actual pattern formation is illustrated in Figure~\ref{fig:patt}.
}

\begin{figure}[t]
\centering
\subfigure[]{
    \includegraphics[width=0.47\textwidth ]{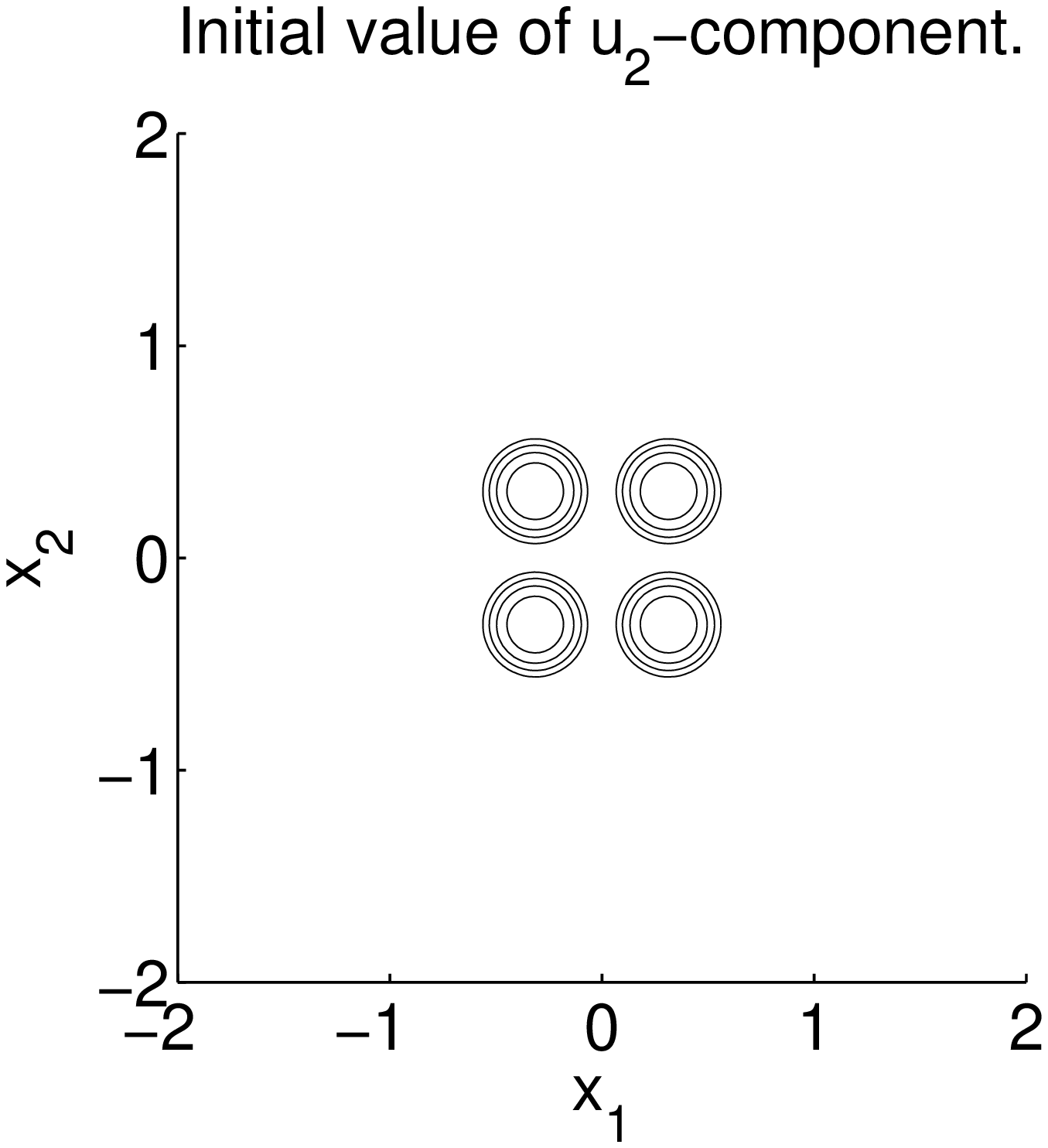}
	\label{fig:gs_t=0}}
\subfigure[]{
    \includegraphics[width=0.47\textwidth ]{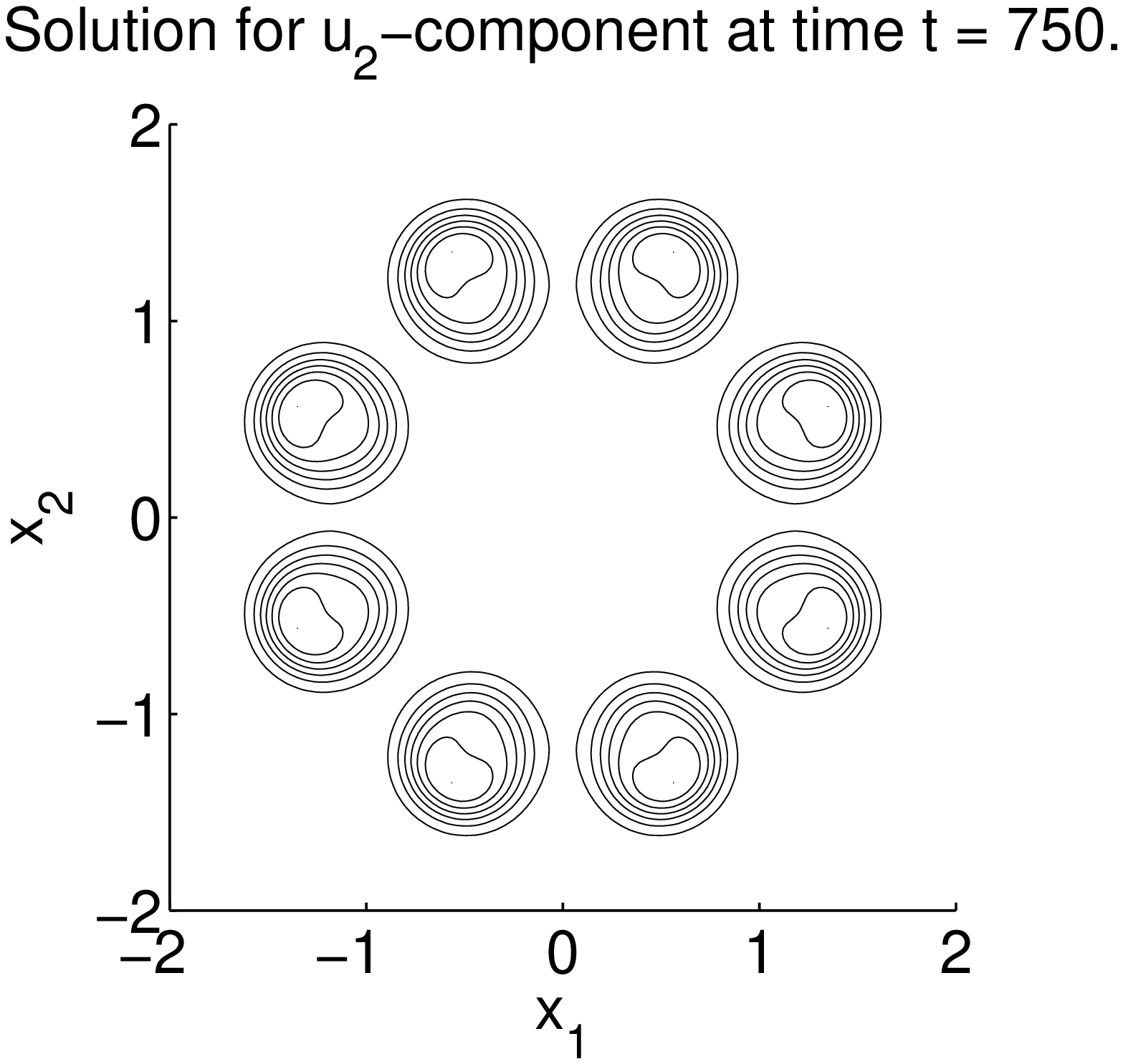}
	\label{fig:gs_t>0}}
\caption{The initial value (see Figure~\ref{fig:error}) of the concentration $u_2$ used for the Gray--Scott solutions can be seen in Figure~\ref{fig:gs_t=0} as contour lines in the $(x_1, x_2)$-plane. The smooth initial spots replicate repeatedly as time increases, this can be seen at time $t = 750$ in Figure~\ref{fig:gs_t>0}. The same parameters as for Figure~\ref{fig:error} are used. For clarity the solutions are displayed for $-2 \leq x_1, x_2\leq 2$ only.}
\label{fig:patt}
\end{figure}

\end{ex}


\begin{thebibliography}{10}
\bibliographystyle{plain}

\bibitem{BAR}
{\sc V.~Barbu},
\emph{Nonlinear Differential Equations of Monotone Types in Banach spaces},
Springer, New York, 2010.

\bibitem{BRE}
{\sc H.~Br\'ezis and A.~Pazy},
\emph{Convergence and approximation of semigroups of nonlinear operators in Banach spaces},
J.\ Funct.\ Anal., 9 (1972), pp.~63--74.

\bibitem{CAG}
{\sc G.~Caginalp},
\emph{An analysis of a phase field model of a free boundary},
Arch.\ Rational Mech.\ Anal., 92 (1986), pp.~205--245.

\bibitem{DEI}
{\sc K.~Deimling},
\emph{Nonlinear Functional Analysis},
Springer, Berlin, 1985.

\bibitem{DES}
{\sc S.~Descombes and M.~Ribot},
\emph{Convergence of the Peaceman--Rachford approximation for reaction-diffusion systems},
Numer.\ Math., 95 (2003), pp.~503--525.

\bibitem{FAO}
{\sc E.~Faou},
\emph{Analysis of splitting methods for reaction-diffusion problems using stochastic calculus},
Math.\ Comp., 78 (2009), pp.~1467--1483.

\bibitem{HAN1}
{\sc E.~Hansen and A.~Ostermann},
\emph{Dimension splitting for evolution equations},
Numer.\ Math., 108 (2008), pp.~557--570.

\bibitem{HAN2}
{\sc E.~Hansen, F.~Kramer and A.~Ostermann},
\emph{A second-order positivity preserving scheme for semilinear parabolic problems},
Appl.\ Numer.\ Math., 62 (2012), pp.~1428--1435.

\bibitem{HUN1}
{\sc W.~Hundsdorfer and J.~Verwer},
\emph{Stability and convergence of the Peaceman--Rachford ADI method for initial-boundary value problems},
Math.\ Comp., 53 (1989), pp.~81--101.

\bibitem{HUN2}
{\sc W.~Hundsdorfer and J.~Verwer},
\emph{Numerical Solution of Time-Dependent Advection-Diffusion-Reaction Equations},
Springer, Berlin, 2003.

\bibitem{LIO}
{\sc P.L.~Lions and B.~Mercier},
\emph{Splitting algorithms for the sum of two nonlinear operators},
SIAM J.\ Numer.\ Anal., 16 (1979), pp.~964--979.

\bibitem{LUB}
{\sc Ch.~Lubich},
\emph{From Quantum to Classical Molecular Dynamics: Reduced Models and Numerical Analysis},
European Mathematical Society, Z\"urich, 2008.

\bibitem{MCL}
{\sc R.~I.~McLachlan and G.~R.~W.~Quispel},
\emph{Splitting methods},
Acta Numer., 11 (2002), pp.~341--434.

\bibitem{MUR}
{\sc J.~D.~Murray},
\emph{Mathematical Biology},
Springer, Berlin, 1989.

\bibitem{PEA}
{\sc D.~W.~Peaceman and H.~H.~Rachford, Jr.},
\emph{The numerical solution of parabolic and elliptic differential equations},
J.\ Soc.\ Indust.\ Appl.\ Math., 3 (1955), pp.~28--41.

\bibitem{RAS}
{\sc S.~Rasmussen},
\emph{Non-linear semi-groups, evolution equations and productintegral representations},
Various Publications Series, Aarhus University, 20 (1972), pp.~1--89.

\bibitem{SCH}
{\sc M.~Schatzman},
\emph{Stability of the Peaceman--Rachford approximation},
J.\ Funct.\ Anal., 162 (1999), pp.~219--255.

\bibitem{TEM}
{\sc R.~Temam},
\emph{Infinite-Dimensional Dynamical Systems in Mechanics and Physics},
Springer, New York, 1988.

\bibitem{ZEI}
{\sc E.~Zeidler},
\emph{Nonlinear Functional Analysis and its Applications II/B},
Springer, New York, 1990.

\end{thebibliography}
\end{document}